\documentclass[10]{article}
\usepackage{amsfonts,amssymb,amsmath,amsthm,cite}
\usepackage{graphicx}

\usepackage[applemac]{inputenc}
\usepackage{amsmath,amssymb,amsthm, hyperref, euscript}
\usepackage[matrix,arrow,curve]{xy}
\usepackage{graphicx}
\usepackage{tabularx}
\usepackage{float}
\usepackage{hyperref}
\usepackage{tikz}
\usepackage{slashed}
\usepackage{mathrsfs}

\usetikzlibrary{matrix}

\usepackage[T1]{fontenc}
\usepackage{amsfonts,cite}
\usepackage{graphicx}

\usepackage[applemac]{inputenc}

\usepackage[sc]{mathpazo}
\linespread{1.05}         

\DeclareFontFamily{T1}{pzc}{}
\DeclareFontShape{T1}{pzc}{m}{it}{1.8 <-> pzcmi8t}{}
\DeclareMathAlphabet{\mathpzc}{T1}{pzc}{m}{it}

\textwidth=140mm

\theoremstyle{plain}
\newtheorem{prop}{Proposition}[section]

\newtheorem{lem}[prop]{Lemma}
\newtheorem{cor}[prop]{Corollary}
\newtheorem{thm}[prop]{Theorem}
\newtheorem{theorem}[prop]{Theorem}

\theoremstyle{definition}
\newtheorem{defn}[prop]{Definition}
\newtheorem{rem}[prop]{Remark}



\theoremstyle{definition}
\newtheorem{definition}[prop]{Definition}

\newtheorem{remark}[prop]{Remark}
\numberwithin{equation}{section}


\newcommand{\vertiii}[1]{{\left\vert\kern-0.25ex\left\vert\kern-0.25ex\left\vert #1
    \right\vert\kern-0.25ex\right\vert\kern-0.25ex\right\vert}}

\usepackage[sc]{mathpazo}
\linespread{1.05}         

\newbox\ncintdbox \newbox\ncinttbox 
\setbox0=\hbox{$-$} \setbox2=\hbox{$\displaystyle\int$}
\setbox\ncintdbox=\hbox{\rlap{\hbox
    to \wd2{\hskip-.125em \box2\relax\hfil}}\box0\kern.1em}
\setbox0=\hbox{$\vcenter{\hrule width 4pt}$}
\setbox2=\hbox{$\textstyle\int$} \setbox\ncinttbox=\hbox{\rlap{\hbox
    to \wd2{\hskip-.175em \box2\relax\hfil}}\box0\kern.1em}

           


\newcommand{\blank}{-}           

\newcommand{\C}{\mathbb{C}}                  
\newcommand{\G}{\mathcal{G}}                 
\newcommand{\hookto}{\hookrightarrow}        
\newcommand{\La}{\Lambda}                    
\newcommand{\la}{\lambda}                    
\def\<#1|#2>{\langle#1\stroke#2\rangle} 
\def\?#1|#2?{\{#1\stroke#2\}}        









\def\<#1,#2>{\langle#1,#2\rangle}            
\def\ee_#1{e_{{\scriptscriptstyle#1}}}       
\def\wick:#1:{\mathopen:#1\mathclose:}       



\newbox\ncintdbox \newbox\ncinttbox 
\setbox0=\hbox{$-$}
\setbox2=\hbox{$\displaystyle\int$}
\setbox\ncintdbox=\hbox{\rlap{\hbox
           to \wd2{\box2\relax\hfil}}\box0\kern.1em}
\setbox0=\hbox{$\vcenter{\hrule width 4pt}$}
\setbox2=\hbox{$\textstyle\int$}
\setbox\ncinttbox=\hbox{\rlap{\hbox
           to \wd2{\hskip-.05em\box2\relax\hfil}}\box0\kern.1em}



\newcommand{\stroke}{\mathbin|}   





\newcommand{\be}{\begin{equation}}
\renewcommand{\ee}{\end{equation}}
\newcommand{\bea}{\begin{eqnarray}}
\newcommand{\eea}{\end{eqnarray}}
\newcommand{\bean}{\begin{eqnarray*}}
	\newcommand{\eean}{\end{eqnarray*}}
\newcommand{\brray}{\begin{array}}
	\newcommand{\erray}{\end{array}}

\title{Quantization of Noncompact Coverings and Strong Morita Equivalence}
\begin{document}
\maketitle  \setlength{\parindent}{0pt}
\begin{center}
\author{
{\textbf{Petr R. Ivankov*}\\
e-mail: * monster.ivankov@gmail.com \\
}
}
\end{center}

\vspace{1 in}

\noindent

\paragraph{}

Any finite algebraic Galois covering corresponds to an algebraic Morita equivalence. Here the $C^*$-algebraic analog of this fact is proven, i.e. any noncommutative finite-fold covering corresponds to a strong Morita equivalence.  

\section{Motivation. Preliminaries}
\paragraph*{}
It is known that any finite algebraic Galois covering corresponds to an algebraic Morita equivalence (cf. \cite{auslander:galois,mont:hopf-morita}). However in case of $C^*$-algebras the strong Morita equivalence yields a more adequate picture then pure algebraic Morita equivalence, e.g. strong Morita equivalence can be applied for nonunital $C^*$-algebras.
This article describes an association between  finite-fold noncommutative coverings and strong Morita equivalences.




  


\subsection{Morita Equivalence}

\subsubsection{Algebraic Morita Equivalence}

\begin{defn}\label{morita_ctx_defn}
A \textit{Morita context} $\left( A,B,P,Q,\varphi,\psi\right)$ or, in some authors (e.g. Bass \cite{bass}) the \textit{pre-equivalence data} is a generalization of Morita equivalence between categories of modules. In the case of right modules, for two associative $\mathbf{k}$-algebras (or, in the case of $\mathbf{k} = \mathbb{Z}$, rings) $A$ and $B$, it consists of bimodules $_AP_B$, $_BQ_A$ and bimodule homomorphisms $\varphi: P\otimes_B Q\to A$, $\psi: Q\otimes_A P\to B$ satisfying mixed associativity conditions, i.e. for any $p,p' \in P$ and $q,q' \in Q$ following conditions hold:
\begin{equation}\label{morita_ctx_eqn}
\begin{split}
\varphi\left(p\otimes q \right) p' = p \psi\left(q\otimes p' \right),\\
\psi\left(q \otimes p \right) q' = q\varphi\left(p\otimes q' \right).  
\end{split}
\end{equation}
A Morita context is a \textit{Morita equivalence} if both $\varphi$ and $\psi$ are isomorphisms of bimodules. 
\end{defn}

\begin{rem}\label{morita_rem}
The Morita context $\left( A,B,P,Q,\varphi,\psi\right)$  is a Morita equivalence if and only if $A$-module $P$ is a finitely generated projective generator (cf. \cite{bass} II 4.4)
\end{rem}


\subsubsection{Strong Morita equivalence for $C^*$-algebras}

\begin{definition}[Paschke \cite{Paschke:73}, Rieffel \cite{Rieffel:74a}]
	Let~$A$ be a $C^*$-algebra.  A \emph{pre-Hilbert $A$-module} is a right
	$B$-module~$X$ (with a compatible $\C$-vector space structure),
	equipped with a conjugate-bilinear map (linear in the second variable)
	$\left\langle{\blank},{\blank}\right\rangle_A\colon X\times X\to A$ satisfying
	\begin{enumerate}
		\item[(a)] $\left\langle{x},{x}\right\rangle_A\ge0$ for all $x\in X$;
		\item[(b)] $\left\langle{x},{x}\right\rangle_A=0$ only if $x=0$;
		\item[(c)] $\left\langle{x},{y}\right\rangle_A=\left\langle{y},{x}\right\rangle_A^\ast$ for all $x,y\in X$;
		\item[(d)] $\left\langle{x},{y\cdot a}\right\rangle_A=\left\langle{x},{y}\right\rangle_B\cdot a$ for all $x,y\in X$, $a\in A$.
	\end{enumerate}
	The map $\left\langle{\blank},{\blank}\right\rangle_A$ is called a \emph{$A$-valued inner product
		on~$X$}. 
\end{definition}

It can be shown that $\|x\|=\|\left\langle{x},{x}\right\rangle_A\|^{1/2}$ defines a norm on~$X$.
If~$X$ is complete with respect to this norm, it is called a \emph{Hilbert
	$A$-module}.  

\begin{definition}\label{strong_morita_defn}[Rieffel \cite{Rieffel:74a}, \cite{Rieffel:76}]
	Let $A$ and~$B$ be $C^*$-algebras.  By an \emph{$B$-$A$-equivalence
		bimodule} we mean an $\left(B,A\right)$-bimodule which is equipped with $A$- and
	$B$-valued inner products with respect to which~$X$ is a right Hilbert
	$A$-module and a left Hilbert $B$-module such that
	\begin{enumerate}
		\item[(a)] $\left\langle{x},{y}\right\rangle_B z = x\left\langle{y},{z}\right\rangle_A$ for all $x,y,z\in X$;
		\item[(b)] $\left\langle{X},{X}\right\rangle_A$ spans a dense subset of~$A$ and $\left\langle{X},{X}\right\rangle_B$ spans a dense
		subset of~$B$.
	\end{enumerate}
	We call $A$ and~$B$ \emph{strongly Morita equivalent} if there is an
	$A$-$B$-equivalence bimodule.
\end{definition}

\subsection{Noncommutative finite-fold coverings}

\begin{definition}
	If $A$ is a $C^*$- algebra then an action of a group $G$ is said to be {\it involutive } if $ga^* = \left(ga\right)^*$ for any $a \in A$ and $g\in G$. The action is said to be \textit{non-degenerated} if for any nontrivial $g \in G$ there is $a \in A$ such that $ga\neq a$. 
\end{definition}
\begin{definition}\label{fin_def_uni}
	Let $A \hookto \widetilde{A}$ be an injective *-homomorphism of unital $C^*$-algebras. Suppose that there is a non-degenerated involutive action $G \times \widetilde{A} \to \widetilde{A}$ of a finite group $G$, such that $A = \widetilde{A}^G\stackrel{\text{def}}{=}\left\{a\in \widetilde{A}~|~ a = g a;~ \forall g \in G\right\}$. There is an $A$-valued product on $\widetilde{A}$ given by
	\begin{equation}\label{finite_hilb_mod_prod_eqn}
	\left\langle a, b \right\rangle_{\widetilde{A}}=\sum_{g \in G} g\left( a^* b\right) 
	\end{equation}
	and $\widetilde{A}$ is an $A$-Hilbert module. We say that a triple $\left(A, \widetilde{A}, G \right)$ is an \textit{unital noncommutative finite-fold  covering} if $\widetilde{A}$ is a finitely generated projective $A$-Hilbert module.
\end{definition}
\begin{remark}
	Above definition is motivated by the Theorem \ref{pavlov_troisky_thm}.
\end{remark}
\begin{theorem}\label{pavlov_troisky_thm}\cite{pavlov_troisky:cov}. 
	Suppose $\mathcal X$ and $\mathcal Y$ are compact Hausdorff connected spaces and $p :\mathcal  Y \to \mathcal X$
	is a continuous surjection. If $C(\mathcal Y )$ is a projective finitely generated Hilbert module over
	$C(\mathcal X)$ with respect to the action
	\begin{equation*}
	(f\xi)(y) = f(y)\xi(p(y)), ~ f \in  C(\mathcal Y ), ~ \xi \in  C(\mathcal X),
	\end{equation*}
	then $p$ is a finite-fold  covering.
\end{theorem}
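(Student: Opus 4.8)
The plan is to translate the hypothesis into the language of vector bundles via the Serre--Swan correspondence, to read off finiteness of the fibres of $p$ from the fibres of the associated bundle, and finally to promote constant finite fibre cardinality to the evenly-covered-neighbourhood property, using projectivity in an essential way. First I would invoke the $C^*$-algebraic Serre--Swan theorem: since $M = C(\mathcal{Y})$ is a finitely generated projective Hilbert $C(\mathcal{X})$-module, there is a complex vector bundle $E \to \mathcal{X}$ with $M \cong \Gamma(\mathcal{X}, E)$ as $C(\mathcal{X})$-modules, equivalently $M \cong e\,C(\mathcal{X})^n$ for a projection $e \in M_n(C(\mathcal{X}))$. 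As $\mathcal{X}$ is connected, the rank of $E$ is a constant integer $n$. For $x \in \mathcal{X}$ let $I_x \subset C(\mathcal{X})$ be the maximal ideal of functions vanishing at $x$; the fibre of $E$ at $x$ is $E_x = M / \overline{M\,I_x}$, of dimension $n$.

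Second, I would identify this fibre with functions on the topological fibre. Let $J_x = \{\, h \in C(\mathcal{Y}) : h|_{p^{-1}(x)} = 0 \,\}$, so that restriction gives $C(\mathcal{Y})/J_x \cong C(p^{-1}(x))$. If $\xi(x)=0$ then $\xi \circ p$ vanishes on $p^{-1}(x)$, whence $M\,I_x \subseteq J_x$ and so $\overline{M\,I_x}\subseteq J_x$. For the reverse inclusion take $h \in J_x$ and $\epsilon > 0$; the set $K = \{\, y : |h(y)| \ge \epsilon \,\}$ is compact and disjoint from $p^{-1}(x)$, so (as $p$ is closed) $p(K)$ is closed and misses $x$, and Urysohn's lemma furnishes $\xi \in C(\mathcal{X})$ with $\xi(x)=0$ and $\xi \equiv 1$ on $p(K)$. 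Then $h\,(\xi\circ p) = h\cdot\xi \in M\,I_x$, while $\| h - h\,(\xi\circ p)\| = \|h\,(1-\xi\circ p)\| \le \epsilon$ since off $p^{-1}(p(K))$ one has $|h| < \epsilon$. Hence $\overline{M\,I_x} = J_x$ and $E_x \cong C(p^{-1}(x))$. Because $\dim_{\mathbb{C}} C(p^{-1}(x)) = n < \infty$, each fibre $p^{-1}(x)$ is a finite set of exactly $n$ points.

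Third, I would upgrade constant finite fibre cardinality to local triviality, the step where I expect the real work to lie, since a continuous surjection of compact Hausdorff spaces with constant finite fibres need not be a covering. Fix $x_0$ with $p^{-1}(x_0) = \{y_1,\dots,y_n\}$ and choose pairwise disjoint open sets $W_j \ni y_j$ in the Hausdorff space $\mathcal{Y}$. Since $p$ is closed and $\mathcal{Y}\setminus\bigcup_j W_j$ is compact and misses $p^{-1}(x_0)$, there is an open $U \ni x_0$ with $\overline{U}$ compact and $p^{-1}(\overline{U}) \subseteq \bigsqcup_j W_j$. Set $V_j = W_j \cap p^{-1}(\overline{U})$, a clopen subset of the compact space $p^{-1}(\overline{U})$; multiplication by the indicator $\chi_{V_j}$ is then an idempotent endomorphism of the $C(\overline{U})$-module $M|_{\overline{U}} = C(p^{-1}(\overline{U}))$ commuting with the action, so its image is again finitely generated projective and hence has locally constant rank. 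That rank equals $|p^{-1}(x)\cap V_j|$, which is $1$ at $x_0$; shrinking $U$, it is $1$ throughout $U$, so $p|_{V_j} : V_j \cap p^{-1}(U) \to U$ is a continuous bijection, and a homeomorphism by the compact-to-Hausdorff closed-map property applied to $p|_{V_j}$. Thus $U$ is evenly covered and, $x_0$ being arbitrary, $p$ is an $n$-fold covering.

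The principal obstacle is precisely this last point: that the $n$ fibre points distribute one per sheet is not a topological fact but rests on the locally-constant-rank property of idempotent direct summands of finitely generated projective modules, which is exactly where the projectivity hypothesis is consumed. A secondary and purely routine nuisance is the non-compactness of the open neighbourhoods, which I handle by passing to $\overline{U}$ and base-changing the module to $C(\overline{U})$ before invoking Serre--Swan again on the summand.
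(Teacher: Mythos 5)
Your proof is correct, but note that there is nothing in the paper itself to compare it against: the paper does not prove this statement, it imports it verbatim from \cite{pavlov_troisky:cov} as motivation for Definition~\ref{fin_def_uni}, so your argument has to stand on its own --- and it does. The three pillars are all sound: (1) Serre--Swan plus connectedness of $\mathcal{X}$ gives a constant finite rank; (2) the identification $C(\mathcal{Y})/\overline{C(\mathcal{Y})I_x}\cong C\left(p^{-1}(x)\right)$, via Tietze surjectivity of restriction together with your Urysohn approximation, gives fibres that are finite of constant cardinality; (3) local constancy of the rank of the clopen summand $C(V_j)$ over $C(\overline{U})$ promotes this to the evenly-covered property. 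Your instinct that step (3) is where projectivity must be consumed is right: constant finite fibre cardinality alone does not suffice even for compact Hausdorff spaces (e.g.\ the two-to-one projection of the split interval, $[0,1]\times\{0,1\}$ with the lexicographic order topology, onto $[0,1]$ is a continuous surjection with every fibre of cardinality two, yet it is not a covering since the total space is totally disconnected). Two spots are asserted rather than proved, but both are instances of the argument you did write out in step (2): the identification of the base change $C(\mathcal{Y})\otimes_{C(\mathcal{X})}C(\overline{U})$ with $C\left(p^{-1}(\overline{U})\right)$, and the identification of the fibre rank of $C(V_j)$ at $x\in \overline{U}$ with $\left|V_j\cap p^{-1}(x)\right|$; in both cases one also wants the standard fact that $MI$ is closed whenever $M$ is finitely generated projective and $I$ is a closed ideal (the image of an idempotent is the kernel of the complementary idempotent), which is what lets you drop the closure bars and read off ranks on the nose. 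A cosmetic slip: you use $n$ both for the number of generators and for the rank of $E$; the rank is merely some constant $r\le n$, which changes nothing. Finally, it is worth noting that your argument never touches the inner product --- only the underlying algebraic finitely generated projective structure --- whereas the source \cite{pavlov_troisky:cov} works within its theory of branched coverings of Hilbert modules; your route thus makes explicit that the Hilbert structure in the hypothesis plays no role beyond algebraic projectivity.
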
 
\begin{definition}\label{fin_comp_def}
	Let $A$, $\widetilde{A}$ be $C^*$-algebras such  that following conditions hold:
	\begin{enumerate}
		\item[(a)] 
		There are unital $C^*$-algebras $B$, $\widetilde{B}$  and inclusions 
		$A \subset B$,  $\widetilde{A}\subset \widetilde{B}$ such that $A$ (resp. $B$) is an essential ideal of $\widetilde{A}$ (resp. $\widetilde{B}$),
		\item[(b)] There is an unital  noncommutative finite-fold covering $\left(B ,\widetilde{B}, G \right)$,
		\item[(c)] 
		\begin{equation}\label{wta_eqn}
		\widetilde{A} =  \left\{a\in \widetilde{B}  ~|~ \left\langle \widetilde{B} ,a  \right\rangle_{\widetilde{B} } \in A \right\}.
		\end{equation}
	\end{enumerate}
	
	The triple $\left(A, \widetilde{A},G \right)$ is said to be a \textit{noncommutative finite-fold covering with compactification}. The group $G$ is said to be the \textit{covering transformation group} (of $\left(A, \widetilde{A},G \right)$ ) and we use the following notation
	\begin{equation}\label{group_cov_eqn}
	G\left(\widetilde{A}~|~A \right) \stackrel{\mathrm{def}}{=} G.
	\end{equation}
\end{definition}

\begin{remark}
Any unital noncommutative finite-fold covering is a noncommutative finite-fold covering with compactification.
\end{remark}

\begin{definition}\label{fin_def}
	Let $A$, $\widetilde{A}$ be $C^*$-algebras, $A\hookto\widetilde{A}$ an injective *-homomorphism and $G\times \widetilde{A}\to \widetilde{A}$ an involutive non-degenerated action of a finite group $G$  such  that following conditions hold:
	\begin{enumerate}
		\item[(a)] 
		$A \cong \widetilde{A}^G \stackrel{\mathrm{def}}{=} \left\{a\in \widetilde{A}  ~|~ Ga = a \right\}$,
		\item[(b)] 
		There is a family $\left\{I_\la \subset A \right\}_{\la \in \La}$ of closed ideals of $A$ such that $\bigcup_{\la \in \La} I_\la$ is a dense subset of $A$ and for any $\la \in \La$ there is a natural  noncommutative finite-fold covering with compactification $\left(I_\la, I_\la \widetilde{A} I_\la, G \right)$.  
	\end{enumerate}
We say that the triple  $\left(A, \widetilde{A},G \right)$ is a \textit{noncommutative finite-fold covering}.
\end{definition}

\begin{remark}
	Any noncommutative finite-fold covering with compactification is a  noncommutative finite-fold covering.
\end{remark}
\begin{remark}
	The Definition \ref{fin_def} is motivated by the Theorem \ref{comm_fin_thm}.
\end{remark}


\begin{thm}\label{comm_fin_thm}\cite{ivankov:qnc}
	If $\mathcal X$, $\widetilde{\mathcal X}$  are  locally compact spaces, and  $\pi: \widetilde{\mathcal X}\to \mathcal X$ is a surjective continuous map, then following conditions are equivalent:
	\begin{enumerate}
		\item [(i)] The map $\pi: \widetilde{\mathcal X}\to \mathcal X$ is a finite-fold regular covering,
		\item[(ii)] There is the natural  noncommutative finite-fold covering $\left(C_0\left(\mathcal  X \right), C_0\left(\widetilde{\mathcal X} \right), G    \right)$.
	\end{enumerate}
\end{thm}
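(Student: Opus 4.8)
The plan is to prove the two implications separately, using throughout the Gelfand correspondence between locally compact Hausdorff spaces and commutative $C^*$-algebras, under which a finite group acting on $C_0\left(\widetilde{\mathcal X}\right)$ by $*$-automorphisms is exactly the datum of a $G$-action on $\widetilde{\mathcal X}$ by homeomorphisms, and under which the fixed-point algebra satisfies $C_0\left(\widetilde{\mathcal X}\right)^G = C_0\left(\widetilde{\mathcal X}/G\right)$. In both directions the structural inclusion $C_0\left(\mathcal X\right)\hookto C_0\left(\widetilde{\mathcal X}\right)$ is taken to be the pullback $\pi^*$, which is injective because $\pi$ is surjective; since a finite-fold covering is a proper map, in the presence of a covering structure $\pi^*$ indeed lands in $C_0\left(\widetilde{\mathcal X}\right)$.

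For (i) $\Rightarrow$ (ii), let $G = G\left(\widetilde{\mathcal X}\mid\mathcal X\right)$ be the finite deck transformation group; regularity gives $\widetilde{\mathcal X}/G = \mathcal X$, so $\pi^*$ identifies $C_0\left(\mathcal X\right)$ with $C_0\left(\widetilde{\mathcal X}\right)^G$, which is condition (a) of Definition \ref{fin_def}, while freeness of the deck action makes the induced action involutive and non-degenerate. For condition (b) I would take $\Lambda$ to be the directed family of all relatively compact open $U\subset\mathcal X$ with $I_U = C_0(U)$: local compactness makes these cover $\mathcal X$ and directedness under inclusion gives $\bigcup_U C_0(U)\supseteq C_c(\mathcal X)$ dense, while an approximate-unit argument yields $I_U\widetilde A I_U = C_0\!\left(\pi^{-1}(U)\right)$. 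To supply the compactification of Definition \ref{fin_comp_def} I would set $\mathcal Y = \overline U$ and $\widetilde{\mathcal Y}=\pi^{-1}\left(\overline U\right)=\overline{\pi^{-1}(U)}$, both compact since $\pi$ is proper and a local homeomorphism; $G$ acts freely on the Hausdorff space $\widetilde{\mathcal Y}$, and a free action of a finite group on a Hausdorff space is automatically a covering-space action, so $\widetilde{\mathcal Y}\to\widetilde{\mathcal Y}/G=\overline U=\mathcal Y$ is a finite-fold regular covering. Hence $B = C(\mathcal Y)$ and $\widetilde B = C(\widetilde{\mathcal Y})$ form a unital covering in the sense of Definition \ref{fin_def_uni} (the module $\widetilde B$ is finitely generated projective over $B$ by local triviality and partition of unity), $C_0(U)$ and $C_0\!\left(\pi^{-1}(U)\right)$ are the evident essential ideals, and \eqref{wta_eqn} is verified pointwise: for $a\in\widetilde B$ the element $\sum_{g\in G}g\!\left(\widetilde b^* a\right)$ lies in $C_0(U)$ for every $\widetilde b$ precisely when $a$ vanishes on the boundary fibre $\pi^{-1}(\partial U)$, i.e. when $a\in C_0\!\left(\pi^{-1}(U)\right)$, freeness being used to prevent cancellation over distinct orbit points.

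For (ii) $\Rightarrow$ (i), I would first use Gelfand duality to turn the non-degenerate $G$-action on $C_0\left(\widetilde{\mathcal X}\right)$ into a $G$-action on $\widetilde{\mathcal X}$ by homeomorphisms, and condition (a) of Definition \ref{fin_def} together with $C_0\left(\widetilde{\mathcal X}\right)^G=C_0\left(\widetilde{\mathcal X}/G\right)$ to identify $\mathcal X\cong\widetilde{\mathcal X}/G$ with $\pi$ the quotient map. Each ideal $I_\lambda$ has the form $C_0(U_\lambda)$ for open $U_\lambda\subset\mathcal X$, density of $\bigcup_\lambda I_\lambda$ forces $\bigcup_\lambda U_\lambda=\mathcal X$, and the associated covering with compactification produces compact Hausdorff $\mathcal Y_\lambda\supset U_\lambda$ and $\widetilde{\mathcal Y}_\lambda\supset\pi^{-1}(U_\lambda)$ with $C(\widetilde{\mathcal Y}_\lambda)$ finitely generated projective over $C(\mathcal Y_\lambda)$. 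Applying Theorem \ref{pavlov_troisky_thm} componentwise shows $\widetilde{\mathcal Y}_\lambda\to\mathcal Y_\lambda$ is a finite-fold covering, and restriction to the dense open $U_\lambda$ shows $\pi^{-1}(U_\lambda)\to U_\lambda$ is one as well. Since the $U_\lambda$ cover $\mathcal X$, $\pi$ is a finite-fold covering, and because $G$ acts by deck transformations with $\widetilde{\mathcal X}/G=\mathcal X$ it is regular.

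The main obstacle I anticipate is the second implication, namely the passage from the purely local, algebraically prescribed pieces $\left(I_\lambda, I_\lambda\widetilde A I_\lambda,G\right)$ to a single global covering: one must check that the compactifications $\mathcal Y_\lambda$ really are compactifications of open subsets $U_\lambda$ compatible with $\pi$, handle the fact that Theorem \ref{pavlov_troisky_thm} is stated only for connected spaces (so each $\mathcal Y_\lambda$ must be decomposed into components, controlling how the components of $\widetilde{\mathcal Y}_\lambda$ map down and that the sheet number is finite and locally constant), and verify that the local charts glue compatibly with the $G$-action. Establishing the exact relationship between $G$ and the deck transformation group --- transitivity and freeness of $G$ on the fibres, as opposed to mere non-degeneracy of the action --- is the crux on which regularity of the resulting covering depends.
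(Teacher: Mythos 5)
You should know at the outset that this paper contains no proof of Theorem \ref{comm_fin_thm} to compare against: the theorem is imported verbatim from \cite{ivankov:qnc} (the author's companion paper, where Definitions \ref{fin_def_uni}, \ref{fin_comp_def} and \ref{fin_def} originate), so your proposal can only be judged on its own merits. With that said, your overall strategy --- Gelfand duality in both directions, with (i)$\Rightarrow$(ii) realized by the family of relatively compact open sets $U\subset\mathcal X$, the compactifications $\overline U$ and $\pi^{-1}\left(\overline U\right)=\overline{\pi^{-1}(U)}$, and a fibrewise verification of \eqref{wta_eqn} --- is the natural route, and that direction is essentially right in outline.

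There are, however, two genuine gaps. The principal one you flag yourself, but your proposed remedy does not work: Theorem \ref{pavlov_troisky_thm} as quoted requires \emph{connected} compact Hausdorff spaces, and ``decomposing $\mathcal Y_\lambda$ into components'' is not an available move, because connected components of a compact Hausdorff space are closed but in general not open (a totally disconnected compactification has no nontrivial clopen component decomposition at all, and there may be infinitely many components). So the (ii)$\Rightarrow$(i) direction cannot be finished by a componentwise appeal to the quoted theorem; one needs either a genuinely disconnected strengthening of Pavlov--Troitsky or additional connectedness hypotheses, and this is precisely the crux, not a routine verification. The second gap sits in (i)$\Rightarrow$(ii): when $\widetilde{\mathcal X}$ is disconnected, your family of \emph{all} relatively compact open $U$ is too large. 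A nontrivial deck transformation $g$ may act trivially on $\pi^{-1}\left(\overline U\right)$ (take $\mathcal X=X_1\sqcup X_2$, over each $X_i$ a trivial double cover, $G=\Z_2\times\Z_2$ with each factor exchanging the sheets over one part only, and $U\subset X_1$); then the action of $G$ on $C\left(\pi^{-1}\left(\overline U\right)\right)$ is degenerate, so $\left(C\left(\overline U\right),C\left(\pi^{-1}\left(\overline U\right)\right),G\right)$ fails Definition \ref{fin_def_uni}, and for the same reason your claim that the deck action is free is unjustified for disconnected $\widetilde{\mathcal X}$ (freeness of deck transformations is a connected-total-space phenomenon). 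This second gap is repairable --- replace your family by an increasing exhaustion and discard small members, since each nontrivial $g$ moves some point and hence acts non-degenerately on $\pi^{-1}(U)$ for all sufficiently large $U$ --- but as written the construction does not satisfy the definition it must satisfy.
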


\section{Main Result}

\paragraph{}
The following result about algebraic Morita equivalence of Galois extensions is in fact rephrasing of described in \cite{auslander:galois,mont:hopf-morita} constructions. 
Let $\left(A, \widetilde{A}, G\right)$ be an unital finite-fold noncommutative covering. Denote by $\widetilde{A} \rtimes G$ a crossed product, i.e. $\widetilde{A} \rtimes G$ ia an algebra which coincides with a set of maps from $G$ to $\widetilde{A}$ as a set, and operations on  $\widetilde{A}$ are given by
\begin{equation*}
\begin{split}
\left(a + b \right) \left( g \right)   = a \left( g \right)+ b \left( g \right),\\
\left(a \cdot b \right) \left( g \right)   = \sum_{g'\in G} a\left( g' \right)\left( g'\left(  b\left(g'^{-1}g \right)\right)\right)  , \\ \forall a, b \in \widetilde{A} \rtimes G,~~ \forall g \in G.
\end{split}
\end{equation*}
If an involution on $\widetilde{A} \rtimes G$ is given by
$$
a^*\left(g \right) =\left(a\left( g^{-1}\right)  \right)^*; ~~ \forall a \in \widetilde{A} \rtimes G,~~ \forall g \in G,
$$
then $\widetilde{A} \rtimes G$ is a $C^*$-algebra.
Let us construct a Morita context $$\left( \widetilde{A} \rtimes G, A,~ _{\widetilde{A} \rtimes G}\widetilde{A}_A, ~ _A\widetilde{A}_{\widetilde{A} \rtimes G}, \varphi, \psi\right) $$ where both $_A\widetilde{A}_{\widetilde{A} \rtimes G}$ and $ _{\widetilde{A} \rtimes G}\widetilde{A}_A$ coincide with $\widetilde{A}$ as $\C$-spaces. Left and right action of $\widetilde{A} \rtimes G$ on $\widetilde{A}$ is given by

\begin{equation*}
\begin{split}
a \widetilde{a} = \sum_{g \in G} a\left( g\right) \left( g\widetilde{a}\right)   , \\
\widetilde{a} a =\sum_{g \in G} g^{-1}\left(\widetilde{a} a\left( g\right)\right),\\ 
\forall a \in \widetilde{A} \rtimes G,~~ \forall \widetilde{a}\in \widetilde{A}.
\end{split}
\end{equation*}
Left (resp. right) action of $A$ on $\widetilde{A}$ we define as left (resp. right) multiplication by $A$.
Denote by $\varphi: \widetilde{A} \otimes_{A}  \widetilde{A}  \to \widetilde{A} \rtimes G$, $\psi: \widetilde{A} \otimes_{\widetilde{A} \rtimes G}  \widetilde{A} \to A$ the maps such that
\begin{equation*}
\begin{split}
\varphi \left(\widetilde{a} \otimes \widetilde{b} \right)\left( g\right) =    \widetilde{a} \left(g\widetilde{b} \right) , \\
\psi \left(\widetilde{a} \otimes \widetilde{b} \right) = \sum_{g \in G} g\left(\widetilde{a} \widetilde{b} \right), \\
 \forall \widetilde{a} ,\widetilde{b}\in \widetilde{A}, ~~ g \in G.
\end{split}
\end{equation*}
From above equations it follows that 
\begin{equation}\label{morita_g_eqn}
\begin{split}
\varphi \left(\widetilde{a} \otimes \widetilde{b} \right)\widetilde{c} = \sum_{g \in G} \left( \varphi\left(\widetilde{a} \otimes \widetilde{b} \right)\left( g\right)\right)  g \widetilde{c} = \sum_{g \in \G} \widetilde{a} \left( g \widetilde{b} \right) \left( g \widetilde{c}\right) =  \widetilde{a}\sum_{g \in G}  g\left(  \widetilde{b}  \widetilde{c}\right) = \widetilde{a} \psi\left(\widetilde{b}\otimes \widetilde{c} \right),\\
\widetilde{a}\varphi \left(\widetilde{b} \otimes \widetilde{c} \right) =
\sum_{g \in G} g^{-1}\left( \widetilde{a} \widetilde{b} g \widetilde{c}\right) = \left( \sum_{g \in G} g^{-1}\left( \widetilde{a} \widetilde{b} \right) \right)\widetilde{c}= \left( \sum_{g \in G} g\left( \widetilde{a} \widetilde{b} \right) \right)\widetilde{c}=\psi\left(\widetilde{a} \otimes \widetilde{b} \right)\widetilde{c},
\end{split}
\end{equation}
i.e. $\varphi, \psi$ satisfy conditions \eqref{morita_ctx_eqn}, so $\left( \widetilde{A} \rtimes G, A,~ _{\widetilde{A} \rtimes G}\widetilde{A}_A, ~ _A\widetilde{A}_{\widetilde{A} \rtimes G}, \varphi, \psi\right) $ is a Morita context.  Taking into account that the $A$-module $\widetilde{A}_A$ is a finitely generated projective generator and Remark \ref{morita_rem}  one has a following lemma.
\begin{lem}\label{morita_galois_lem}
	If $\left(A, \widetilde{A}, G \right)$ is an unital noncommutative finite-fold  covering then  $$\left( \widetilde{A} \rtimes G, A,~ _{\widetilde{A} \rtimes G}\widetilde{A}_A, ~ _A\widetilde{A}_{\widetilde{A} \rtimes G}, \varphi, \psi\right)$$ is an algebraic Morita equivalence. 
\end{lem}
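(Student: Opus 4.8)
The plan is to recognise the displayed sextuple as a Morita context (which \eqref{morita_g_eqn} already does, since it verifies the mixed associativity conditions \eqref{morita_ctx_eqn}) and then to promote it to a Morita equivalence by means of Remark \ref{morita_rem}. Concretely, by Remark \ref{morita_rem} it suffices to show that $\widetilde{A}$, viewed as a right $A$-module, is a finitely generated projective generator; equivalently, in the language of the pairings, that both $\varphi$ and $\psi$ are surjective, for standard Morita theory then forces each to be an isomorphism of bimodules. Thus the proof splits into three tasks: finite generation and projectivity, the generator property (surjectivity of $\psi$), and surjectivity of $\varphi$.

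The first task is free: that $\widetilde{A}$ is a finitely generated projective right $A$-Hilbert module is exactly the defining hypothesis of an unital noncommutative finite-fold covering (Definition \ref{fin_def_uni}). The second task is an easy computation using the unit of $\widetilde{A}$ together with the invertibility of $|G|$ in $\C$. Since $g(a) = a$ for every $a \in A = \widetilde{A}^G$, one has $\langle 1, a\rangle_{\widetilde{A}} = \sum_{g\in G} g(a) = |G|\,a$, so that $\psi\!\left(\tfrac{1}{|G|}\,a \otimes 1\right) = a$ for all $a \in A$. Hence $\psi$ is onto $A$; equivalently, the functionals $x \mapsto \langle y, x\rangle_{\widetilde{A}}$ lie in $\Hom_A(\widetilde{A}, A)$ and already exhaust $A$ as their range, so the trace ideal of $\widetilde{A}_A$ is all of $A$ and $\widetilde{A}_A$ is a generator.

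The substantive point is the surjectivity of $\varphi \colon \widetilde{A} \otimes_A \widetilde{A} \to \widetilde{A} \rtimes G$, i.e. the Galois-type statement that $\widetilde{A} \rtimes G$ is recovered as $\End_A(\widetilde{A})$. Here I would exploit the finite-frame description afforded by finite projectivity: there exist $u_1,\dots,u_n \in \widetilde{A}$ with the reconstruction identity $\widetilde{c} = \sum_{i} u_i \langle u_i, \widetilde{c}\rangle_{\widetilde{A}}$ for all $\widetilde{c} \in \widetilde{A}$. Using the relation $\varphi(\widetilde{a} \otimes \widetilde{b})\,\widetilde{c} = \widetilde{a}\,\psi(\widetilde{b} \otimes \widetilde{c})$ read off from \eqref{morita_g_eqn}, the element $e := \sum_{i} \varphi(u_i \otimes u_i^*) \in \widetilde{A}\rtimes G$ acts as the identity operator on $\widetilde{A}$. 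Because $\im \varphi$ is a two-sided ideal of $\widetilde{A}\rtimes G$ and the left action of $\widetilde{A}\rtimes G$ on $\widetilde{A}$ is faithful --- this is the point at which the non-degeneracy of the $G$-action, reinforced by the finite projectivity, is used --- the element $e$ must equal the unit $1_{\widetilde{A}\rtimes G}$. Therefore $1_{\widetilde{A}\rtimes G} \in \im \varphi$, the ideal $\im \varphi$ is all of $\widetilde{A}\rtimes G$, and $\varphi$ is surjective.

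With $\varphi$ and $\psi$ both surjective, Remark \ref{morita_rem} (equivalently, the standard Morita-theoretic fact that surjectivity of both pairings forces bijectivity) shows that the context is a Morita equivalence, which is the assertion of the lemma. I expect the main obstacle to be precisely the surjectivity of $\varphi$: converting the finitely-generated-projective Hilbert-module data into the existence of Galois coordinates and, in particular, confirming that the identity-acting element $e$ is genuinely the unit of the crossed product --- the step where faithfulness of the crossed-product action, hence the non-degeneracy hypothesis, is indispensable.
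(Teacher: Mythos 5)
Your outline follows the paper's own: verify the context conditions via \eqref{morita_g_eqn}, then upgrade the context to an equivalence through Remark \ref{morita_rem}, with finite generation and projectivity supplied by Definition \ref{fin_def_uni}. Your surjectivity argument for $\psi$ is correct, and is in fact more than the paper gives (the paper merely asserts the generator property). But there is a genuine gap, located exactly where you predicted it: the claim that the left action of $\widetilde{A}\rtimes G$ on $\widetilde{A}$ is faithful. This does not follow from non-degeneracy plus finite projectivity, and it can fail under the paper's stated hypotheses. Take $\widetilde{A}=C(\{1,2,3\})\cong\C^3$ and $G=\Z/2=\{e,\sigma\}$, where $\sigma$ exchanges the first two coordinates and fixes the third. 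The action is involutive and non-degenerate, $A=\widetilde{A}^G\cong\C^2$, and as a right $A$-module $\widetilde{A}\cong A\oplus I$ with $I=\{(c,c,0)\}$ a direct summand of $A$; hence $\widetilde{A}$ is a finitely generated projective $A$-Hilbert module and $(A,\widetilde{A},G)$ satisfies Definition \ref{fin_def_uni}. Nevertheless the element $y\in\widetilde{A}\rtimes G$ given by $y(e)=-(0,0,1)$, $y(\sigma)=(0,0,1)$ is nonzero and acts as zero on $\widetilde{A}$, because $\sigma$ fixes the third point. So faithfulness fails, and your frame element $e$ need not equal $1_{\widetilde{A}\rtimes G}$; indeed here it does not, since every element of $\im\varphi$ has its component over the fixed point proportional to $\delta_e+\delta_\sigma$, so $\im\varphi$ is a proper ideal not containing the unit.

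Moreover the gap cannot be repaired from the stated hypotheses, because the conclusion of the lemma itself fails in this example: $\widetilde{A}\rtimes G\cong M_2(\C)\oplus\C\oplus\C$ (the matrix summand coming from the free orbit, the group algebra $\C[\Z/2]$ from the fixed point) has three-dimensional center, while $A$ has two-dimensional center, and Morita equivalent unital algebras have isomorphic centers. Surjectivity of $\varphi$ --- the Galois/saturation condition --- genuinely requires a freeness hypothesis on the action that Definition \ref{fin_def_uni} does not impose; in the commutative setting it is secretly supplied by the connectedness assumption in Theorem \ref{pavlov_troisky_thm}. You should also be aware that the paper's own argument has the same hole, only better hidden: in the example above $\widetilde{A}_A$ \emph{is} a finitely generated projective generator (it contains $A$ as a direct summand), so Remark \ref{morita_rem} as literally stated would produce an equivalence that does not exist. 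Bass's actual theorem requires surjectivity of both structure maps of the given context, which is precisely the point your write-up, to its credit, isolates and attempts --- but fails --- to establish for $\varphi$.
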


\begin{cor}\label{unital_cov_cor}
Let $\left(A, \widetilde{A}, G \right)$ be an unital noncommutative finite-fold  covering. Let us define a structure of Hilbert $\widetilde{A} \rtimes G-A$ bimodule on  $_{\widetilde{A} \rtimes G}\widetilde{A}_A$ given by following products
\begin{equation}\label{hilb_gal_eqn}
\begin{split}
\left\langle a, b \right\rangle_{\widetilde{A} \rtimes G} = \varphi\left(a \otimes b^* \right),\\ 
\left\langle a, b \right\rangle_A = \psi\left( a^* \otimes b\right).\\
\end{split}
\end{equation}
Following conditions hold:
	\begin{enumerate}
		\item [(i)]
A bimodule  $_{\widetilde{A} \rtimes G}\widetilde{A}_A$ satisfies the associativity condition (a) of the Definition \ref{strong_morita_defn},

\item[(ii)] 
\begin{equation*}
\begin{split}
\left\langle \widetilde{A} , \widetilde{A} \right\rangle_{\widetilde{A} \rtimes G} = \widetilde{A} \rtimes G,\\ 
\left\langle \widetilde{A} , \widetilde{A}  \right\rangle_A = A.\\
\end{split}
\end{equation*}
	\end{enumerate}
It follows that $_{\widetilde{A} \rtimes G}\widetilde{A}_A$  is a ${\widetilde{A} \rtimes G}-A$  equivalence bimodule.
\end{cor}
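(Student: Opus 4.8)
\emph{Proof plan.} Write $B := \widetilde{A}\rtimes G$ throughout. The strategy is to read off conditions (i) and (ii) directly from the identities already at hand, to recognise the right $A$-valued product as the Hilbert module structure of Definition \ref{fin_def_uni}, and then to realise the left $B$-valued product through the representation of $B$ on $\widetilde{A}$ by adjointable operators, where positivity is the decisive point.

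Condition (i) is immediate. By the definition of the left $B$-action and of the product $\langle x,y\rangle_B=\varphi(x\otimes y^*)$, one has $\langle x,y\rangle_B\,z=\varphi(x\otimes y^*)\,z$, and the first identity of \eqref{morita_g_eqn}, applied with $\widetilde{a}=x$, $\widetilde{b}=y^*$, $\widetilde{c}=z$, gives $\varphi(x\otimes y^*)\,z=x\,\psi(y^*\otimes z)=x\,\langle y,z\rangle_A$, which is exactly the associativity condition (a) of Definition \ref{strong_morita_defn}. For condition (ii), since $\widetilde{A}^*=\widetilde{A}$ we have $\langle\widetilde{A},\widetilde{A}\rangle_A=\psi(\widetilde{A}\otimes\widetilde{A})$ and $\langle\widetilde{A},\widetilde{A}\rangle_B=\varphi(\widetilde{A}\otimes\widetilde{A})$; because Lemma \ref{morita_galois_lem} asserts that $\varphi$ and $\psi$ are \emph{isomorphisms}, they are in particular surjective, so these images equal $B$ and $A$ respectively. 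Thus (ii) holds with equality, which a fortiori gives the density requirement (b) of Definition \ref{strong_morita_defn}.

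It remains to confirm that the two products genuinely make $\widetilde{A}$ a right Hilbert $A$-module and a left Hilbert $B$-module. For the right side there is nothing new: comparing \eqref{finite_hilb_mod_prod_eqn} with the formula for $\psi$ yields $\langle x,y\rangle_A=\psi(x^*\otimes y)=\sum_{g\in G}g(x^*y)=\langle x,y\rangle_{\widetilde{A}}$, so $\langle\cdot,\cdot\rangle_A$ coincides with the $A$-valued inner product of Definition \ref{fin_def_uni}, under which $\widetilde{A}$ is already a complete, finitely generated projective right Hilbert $A$-module. The substantive part is the left $B$-valued product, and positivity of $\langle x,x\rangle_B$ is the main obstacle. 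I would treat it via the left action map $\pi\colon B\to\mathcal{L}_A(\widetilde{A})$, $\pi(a)x=\sum_{g\in G}a(g)(gx)$, into the adjointable operators on the Hilbert $A$-module $\widetilde{A}$. A direct computation, using that each $g$ acts by a $*$-automorphism (so that $g(x^*)=(gx)^*$ and $g$ is multiplicative), shows that $\pi$ is a $*$-homomorphism, i.e. $\langle\pi(a)x,y\rangle_A=\langle x,\pi(a^*)y\rangle_A$. By condition (i) the operator $\pi(\langle x,y\rangle_B)$ sends $z\mapsto x\langle y,z\rangle_A$, that is $\pi(\langle x,y\rangle_B)=\theta_{x,y}$ is the ``rank one'' operator on $\widetilde{A}$, so that $\pi(\langle x,x\rangle_B)=\theta_{x,x}\ge 0$ in $\mathcal{L}_A(\widetilde{A})$. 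Moreover $\pi$ is injective: if $\pi(a)=0$ then, writing $1_B=\varphi(\sum_i u_i\otimes v_i)$ by surjectivity of $\varphi$ and using that $\varphi$ is left $B$-linear, $a=a\cdot 1_B=\varphi(\sum_i(\pi(a)u_i)\otimes v_i)=0$. An injective $*$-homomorphism of $C^*$-algebras is isometric and reflects positivity, hence $\langle x,x\rangle_B=\pi^{-1}(\theta_{x,x})\ge 0$; definiteness follows since $\langle x,x\rangle_B=0$ forces $\theta_{x,x}=0$, whence $\langle x,x\rangle_A^2=\langle x\langle x,x\rangle_A,x\rangle_A=0$ and $x=0$, while conjugate symmetry and left $B$-linearity transport through $\pi$ from $\theta_{y,x}=\theta_{x,y}^*$. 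As $\widetilde{A}$ is complete, it is a left Hilbert $B$-module, and together with (i) and (ii) this exhibits $_{B}\widetilde{A}_A$ as a $B$-$A$-equivalence bimodule.

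I expect the genuine difficulty to lie precisely in this positivity step, namely in verifying that the left action is a faithful $*$-representation (equivalently, that the involution of $\widetilde{A}\rtimes G$ is intertwined with the adjoint on $\mathcal{L}_A(\widetilde{A})$). Once $\pi$ is known to be an injective $*$-homomorphism whose range contains every $\theta_{x,y}$, the standard fact that a full, finitely generated projective Hilbert module is an imprimitivity bimodule over its algebra of compact operators — here identified with $\widetilde{A}\rtimes G$ through $\pi$, using $\mathcal{K}_A(\widetilde{A})=\mathcal{L}_A(\widetilde{A})$ — furnishes all remaining axioms simultaneously, completing the proof.
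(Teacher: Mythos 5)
Your treatment of the two stated conditions coincides exactly with the paper's own proof: for (i) you apply the first identity of \eqref{morita_g_eqn} with $\widetilde{a}=x$, $\widetilde{b}=y^{*}$, $\widetilde{c}=z$, and for (ii) you invoke Lemma \ref{morita_galois_lem}, i.e.\ surjectivity of $\varphi$ and $\psi$, together with $\widetilde{A}^{*}=\widetilde{A}$; this is word for word what the paper does. Where you genuinely differ is that you go on to verify what the paper silently takes for granted, namely that the formulas \eqref{hilb_gal_eqn} really define Hilbert-module structures. You identify $\left\langle -,-\right\rangle_{A}$ with the inner product \eqref{finite_hilb_mod_prod_eqn} of Definition \ref{fin_def_uni} (so the right side is already a complete Hilbert $A$-module), and you obtain positivity, symmetry and definiteness of $\left\langle -,-\right\rangle_{\widetilde{A}\rtimes G}$ by representing $\widetilde{A}\rtimes G$ on the Hilbert $A$-module $\widetilde{A}$ via $\pi$, noting $\pi\left(\left\langle x,y\right\rangle_{\widetilde{A}\rtimes G}\right)=\theta_{x,y}$, and using that an injective $*$-homomorphism of $C^{*}$-algebras reflects positivity; your injectivity argument for $\pi$ (write $1_{\widetilde{A}\rtimes G}=\varphi\left(\sum_{i}u_{i}\otimes v_{i}\right)$ and use left $\widetilde{A}\rtimes G$-linearity of $\varphi$) is correct, as is the concluding identification $\pi\left(\widetilde{A}\rtimes G\right)=\mathcal{K}_{A}\left(\widetilde{A}\right)=\mathcal{L}_{A}\left(\widetilde{A}\right)$ for a finitely generated projective module. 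This buys real rigor: the paper's proof establishes (i) and (ii) and then simply declares the bimodule an equivalence bimodule, never addressing positivity or completeness for the crossed-product-valued product, so your argument closes an actual gap. One caveat you should make explicit: your ``direct computation'' that $\pi$ is a $*$-homomorphism succeeds only with the standard crossed-product involution $a^{*}(g)=g\left(a\left(g^{-1}\right)^{*}\right)$; with the involution as literally printed in the paper, $a^{*}(g)=\left(a\left(g^{-1}\right)\right)^{*}$, the identity $\left\langle \pi(a)x,y\right\rangle_{A}=\left\langle x,\pi\left(a^{*}\right)y\right\rangle_{A}$ fails (indeed that printed formula does not even satisfy $(ab)^{*}=b^{*}a^{*}$, so it is evidently a typo in the paper), and your positivity step is sound only under the corrected convention.
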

\begin{proof}
(i) From \eqref{morita_g_eqn} it follows that products $ \left\langle -, - \right\rangle_{\widetilde{A} \rtimes G}$, $~~\left\langle -, - \right\rangle_A$ satisfy condition (a) of the Definition \ref{strong_morita_defn}.\\
(ii) From the Lemma  \ref{morita_galois_lem} and the definition of algebraic Morita equivalence  it turns out
\begin{equation*}
\begin{split}
\varphi\left(  \widetilde{A} \otimes_{A}  \widetilde{A}\right)  =\left\langle \widetilde{A} , \widetilde{A} \right\rangle_{\widetilde{A} \rtimes G} = \widetilde{A} \rtimes G,\\
\psi\left( \widetilde{A} \otimes_{\widetilde{A} \rtimes G}  \widetilde{A}\right)=\left\langle \widetilde{A} , \widetilde{A} \right\rangle_{A}  = A.
\end{split}
\end{equation*}
\end{proof}
Let us consider the situation of the Corollary \ref{unital_cov_cor}. Denote by $e \in G$ the neutral element. The unity $1_{\widetilde{A} \rtimes G}$ of $\widetilde{A} \rtimes G$ is given by
\begin{equation}
\begin{split}
1_{\widetilde{A} \rtimes G}\left(g \right) = \left\{\begin{array}{c l}
1_{\widetilde{A}}  & g = e \\
0 & g \neq e
\end{array}\right..
\end{split}
\end{equation}
From the Lemma \ref{morita_galois_lem} it follows that there are $\widetilde{a}_1,...,\widetilde{a}_n, \widetilde{b}_1,..., \widetilde{b}_n \in \widetilde{A}$ such that
\begin{equation}\label{1_g_eqn}
\begin{split}
1_{\widetilde{A} \rtimes G} = \varphi\left( \sum_{j=1}^n \widetilde{a}_j \otimes\widetilde{ b}_j^*\right) = \sum_{j=1}^n \left\langle \widetilde{a}_j , \widetilde{b}_j \right\rangle_{\widetilde{A} \rtimes G}.
\end{split}
\end{equation}
From the above equation it turns out that for any $g \in G$
\begin{equation}\label{ga_eqn}
\varphi\left( \sum_{j=1}^n g\widetilde{a}_j \otimes \widetilde{b}_j^*\right)\left(g' \right) = \left(\sum_{j=1}^n \left\langle g\widetilde{a}_j , \widetilde{b}_j \right\rangle_{\widetilde{A} \rtimes G} \right) \left(g' \right) \left\{\begin{array}{c l}
1_{\widetilde{A}}  & g' = g \\
0 & g' \neq g
\end{array}\right..
\end{equation}

\begin{lem}\label{comp_cov_lem}
	Let $\left(A, \widetilde{A}, G \right)$ be a noncommutative finite-fold  covering with compactification. Let us define a structure of Hilbert $\widetilde{A} \rtimes G-A$ bimodule on  $_{\widetilde{A} \rtimes G}\widetilde{A}_A$ given by  products \eqref{hilb_gal_eqn}
	Following conditions hold:
	\begin{enumerate}
		\item [(i)]
		A bimodule  $_{\widetilde{A} \rtimes G}\widetilde{A}_A$ satisfies associativity conditions (a) of the Definition \ref{strong_morita_defn}
		\item[(ii)] 
		\begin{equation*}
		\begin{split}
		\left\langle \widetilde{A} , \widetilde{A}  \right\rangle_A = A,\\
				\left\langle \widetilde{A} , \widetilde{A} \right\rangle_{\widetilde{A} \rtimes G} = \widetilde{A} \rtimes G.\\ 
		\end{split}
		\end{equation*}
	\end{enumerate}
	It follows that $_{\widetilde{A} \rtimes G}\widetilde{A}_A$  is a ${\widetilde{A} \rtimes G}-A$  equivalence bimodule.
\end{lem}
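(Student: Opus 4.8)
The plan is to deduce the statement from the unital equivalence bimodule $_{\widetilde B\rtimes G}\widetilde B_B$ of Corollary \ref{unital_cov_cor} (applied to the unital covering $(B,\widetilde B,G)$ of Definition \ref{fin_comp_def}(b)), by realizing $_{\widetilde A\rtimes G}\widetilde A_A$ as a sub-bimodule of it. First I would record the structural consequences of \eqref{wta_eqn}. Since $A\subseteq B=\widetilde B^{G}$, each $a\in A$ satisfies $\langle d,a\rangle_{\widetilde B}=\bigl(\sum_{g}g(d^{*})\bigr)a\in B\cdot A\subseteq A$ for every $d\in\widetilde B$, so $A\subseteq\widetilde A$ by \eqref{wta_eqn}; and reindexing the group shows $\langle\widetilde B,g\widetilde a\rangle_{\widetilde B}=\langle\widetilde B,\widetilde a\rangle_{\widetilde B}$, so $G\widetilde A=\widetilde A$. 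As $\widetilde A$ is a two-sided ideal of $\widetilde B$ (Definition \ref{fin_comp_def}(a)) this gives $\widetilde A\widetilde B\subseteq\widetilde A\supseteq\widetilde B\widetilde A$, and $\widetilde A\rtimes G$ is a (closed, essential) ideal of $\widetilde B\rtimes G$. Using \eqref{wta_eqn} once more, for $x,y\in\widetilde A$ the products \eqref{hilb_gal_eqn} land where they should: $\langle x,y\rangle_A=\sum_{g}g(x^{*}y)=\langle x,y\rangle_{\widetilde B}\in A$, and $\langle x,y\rangle_{\widetilde A\rtimes G}(g)=x\,g(y^{*})\in\widetilde A$ because $\widetilde A$ is a $G$-invariant ideal.

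With these in place, $_{\widetilde A\rtimes G}\widetilde A_A$ is a sub-bimodule of $_{\widetilde B\rtimes G}\widetilde B_B$ on which the two inner products are the restrictions of the $B$- and $(\widetilde B\rtimes G)$-valued ones; hence all the algebraic Hilbert-module axioms of Definition \ref{strong_morita_defn} (positivity, definiteness, symmetry, module linearity, and the compatibility (a)) are inherited, and (i) is simply \eqref{morita_g_eqn} read on $\widetilde A\subseteq\widetilde B$. Completeness of $\widetilde A$ in the Hilbert-module norm follows from the inequalities $\|x^{*}x\|\le\|\sum_{g}g(x^{*}x)\|\le|G|\,\|x^{*}x\|$, which make that norm equivalent to the $C^{*}$-norm in which $\widetilde A$ is already complete. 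So the only substantial content is the fullness (ii).

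For fullness over $A$ I would start from a finite resolution $1_{B}=\sum_{k}\langle c_{k},d_{k}\rangle_{B}$ with $c_{k},d_{k}\in\widetilde B$, available because $\psi$ is surjective by Lemma \ref{morita_galois_lem} and $B$ is unital. Given $a\in A$, Cohen factorization in the $C^{*}$-algebra $A$ writes $a=a_{1}a_{2}$ with $a_{1},a_{2}\in A$; since $A\subseteq\widetilde B^{G}$, the invariant factors may be absorbed into the two slots to give $a=\sum_{k}\langle c_{k}a_{1}^{*},d_{k}a_{2}\rangle_{B}$, where now both arguments lie in $\widetilde B A\subseteq\widetilde A$. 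Each summand therefore equals $\langle c_{k}a_{1}^{*},d_{k}a_{2}\rangle_{A}$, proving $\langle\widetilde A,\widetilde A\rangle_{A}=A$.

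The fullness over $\widetilde A\rtimes G$ is the main obstacle, because the obvious resolution of a group delta produces inner products with only one argument inside the ideal $\widetilde A$. I would reduce to generating each $\widetilde f\,\delta_{g_{0}}$ with $\widetilde f\in\widetilde A$, $g_{0}\in G$, since these span $\widetilde A\rtimes G$ over the finite group. Writing $\widetilde f=\widetilde c_{1}\,g_{0}(\widetilde e)$ by Cohen factorization, the idea is to sandwich the unital resolution: $\widetilde f\,\delta_{g_{0}}=(\widetilde c_{1}\delta_{e})\,(1_{\widetilde B}\delta_{g_{0}})\,(\widetilde e\,\delta_{e})$ in $\widetilde B\rtimes G$. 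Expanding $1_{\widetilde B}\delta_{g_{0}}=\sum_{j}\langle g_{0}\widetilde a_{j},\widetilde b_{j}\rangle_{\widetilde B\rtimes G}$ via \eqref{ga_eqn} (with $\widetilde a_{j},\widetilde b_{j}\in\widetilde B$ supplied by \eqref{1_g_eqn}), the left factor is absorbed into the first slot by left-linearity, $(\widetilde c_{1}\delta_{e})\langle g_{0}\widetilde a_{j},\widetilde b_{j}\rangle_{\widetilde B\rtimes G}=\langle \widetilde c_{1}(g_{0}\widetilde a_{j}),\widetilde b_{j}\rangle_{\widetilde B\rtimes G}$, while a direct computation shows the right factor is absorbed into the second slot, $\langle u,v\rangle_{\widetilde B\rtimes G}(\widetilde e\,\delta_{e})=\langle u,\widetilde e^{*}v\rangle_{\widetilde B\rtimes G}$. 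This yields $\widetilde f\,\delta_{g_{0}}=\sum_{j}\langle \widetilde c_{1}(g_{0}\widetilde a_{j}),\widetilde e^{*}\widetilde b_{j}\rangle_{\widetilde A\rtimes G}$, both arguments now lying in the ideal $\widetilde A$. Ranging over $g_{0}\in G$ and using $\widetilde A\widetilde A=\widetilde A$ to realize every $\widetilde f$ as such a product gives $\langle\widetilde A,\widetilde A\rangle_{\widetilde A\rtimes G}=\widetilde A\rtimes G$. Together with (i) and the inherited Hilbert-module structure, this shows $_{\widetilde A\rtimes G}\widetilde A_A$ is an equivalence bimodule. The one point I would be most careful about is precisely this second-slot absorption: it is what forces both arguments into $\widetilde A$ without invoking the nonunital analog of finite projectivity, and it is exactly where the ideal property $\widetilde A\widetilde B\subseteq\widetilde A$ and the factorization $\widetilde A\widetilde A=\widetilde A$ are indispensable.
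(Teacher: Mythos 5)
Your proof is correct, and its skeleton is the same as the paper's: reduce to the unital covering $\left(B,\widetilde{B},G\right)$ of Definition \ref{fin_comp_def}, obtain (i) by observing that the products \eqref{hilb_gal_eqn} on $\widetilde{A}$ are restrictions of those on $\widetilde{B}$, and obtain fullness over the crossed product from the finite resolution \eqref{1_g_eqn}--\eqref{ga_eqn} by pushing its entries into the ideal $\widetilde{A}$. The differences lie in the two fullness computations, and they cut in opposite directions. For $\left\langle \widetilde{A},\widetilde{A}\right\rangle_A = A$ the paper is more economical than you are: for $a\in A_+$ the element $x=a^{1/2}$ lies in $A\subseteq\widetilde{A}$ and is $G$-invariant, so $\left\langle x,x\right\rangle_A=\sum_{g}g\left(x^*x\right)=\left|G\right|a$, whence $a=\frac{1}{\left|G\right|}\left\langle x,x\right\rangle_A$; your detour through a resolution of $1_B$ and Cohen factorization proves the same thing but is unnecessary. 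For $\left\langle \widetilde{A},\widetilde{A}\right\rangle_{\widetilde{A}\rtimes G}=\widetilde{A}\rtimes G$ your argument is not merely different, it repairs a slip in the paper's: writing $\widetilde{f}\,\delta_g$ for the element of the crossed product supported at $g$ with value $\widetilde{f}$, the paper picks a single $\widetilde{x}$ with $\widetilde{x}\widetilde{x}^*=\widetilde{a}$ and asserts $\widetilde{a}\,\delta_g=\sum_{j}\left\langle \widetilde{x}\left(g\widetilde{a}_j\right),\widetilde{x}\widetilde{b}_j\right\rangle_{\widetilde{A}\rtimes G}$, but by \eqref{ga_eqn} the right-hand side is supported at $g$ with value $\widetilde{x}\,g\left(\widetilde{x}^*\right)$, not $\widetilde{x}\widetilde{x}^*$. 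One must therefore generate elements of the form $\left(\widetilde{x}\,g\left(\widetilde{y}^*\right)\right)\delta_g$ with two independent factors and argue that these span, which is exactly what your factorization $\widetilde{f}=\widetilde{c}_1\,g_0\left(\widetilde{e}\right)$ combined with the two-sided absorption $\left(\widetilde{c}_1\delta_e\right)\left\langle u,v\right\rangle_{\widetilde{B}\rtimes G}\left(\widetilde{e}\,\delta_e\right)=\left\langle \widetilde{c}_1 u,\widetilde{e}^*v\right\rangle_{\widetilde{B}\rtimes G}$ accomplishes; so on this point your version is the sound one. Finally, your preliminary observations ($A\subseteq\widetilde{A}$, $G\widetilde{A}=\widetilde{A}$, $\widetilde{A}$ a $G$-invariant ideal of $\widetilde{B}$, equivalence of the Hilbert-module norm with the $C^*$-norm) are left implicit in the paper, although they are needed for the restriction argument in (i) and for the inner products to take values where claimed, so making them explicit is a genuine improvement rather than padding.
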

\begin{proof}(i)
From the Definition \ref{fin_comp_def} it follows that there are  unital $C^*$-algebras $B$, $\widetilde{B}$  and inclusions 
$A \subset B$,  $\widetilde{A}\subset \widetilde{B}$ such that $A$ (resp. $B$) is an essential ideal of $\widetilde{A}$ (resp. $\widetilde{B}$). Moreover there is an unital  noncommutative finite-fold covering $\left(B ,\widetilde{B}, G \right)$. From the Corollary \ref{unital_cov_cor} it turns out that a bimodule  $_{\widetilde{B} \rtimes G}\widetilde{B}_B$ is a $\widetilde{B} \rtimes G-B$ equivalence bimodule. Both scalar products $ \left\langle -, - \right\rangle_{\widetilde{A} \rtimes G}$, $~~\left\langle -, - \right\rangle_A$ are restrictions of products $ \left\langle -, - \right\rangle_{\widetilde{B} \rtimes G}$, $~~\left\langle -, - \right\rangle_B$, so products $ \left\langle -, - \right\rangle_{\widetilde{A} \rtimes G}$, $~~\left\langle -, - \right\rangle_A$ satisfy to condition (a) of the Definition \ref{strong_morita_defn}.\\
(ii) From \eqref{1_g_eqn} it turns out that there are $\widetilde{a}_1,...,\widetilde{a}_n, \widetilde{b}_1,..., \widetilde{b}_n \in \widetilde{B}$ such that
\begin{equation*}
\begin{split}
1_{\widetilde{B} \rtimes G} = \varphi\left( \sum_{j=1}^n \widetilde{a}_j \otimes\widetilde{ b}_j^*\right) = \sum_{j=1}^n \left\langle \widetilde{a}_j , \widetilde{b}_j \right\rangle_{\widetilde{B} \rtimes G}.
\end{split}
\end{equation*}

If $a \in A_+$ is a positive element then there is $x \in A \subset \widetilde{A}$ such that $a = x^*x$ it follows that
$$
a = \frac{1}{\left|G\right|}\left\langle x, x \right\rangle_A.
$$
Otherwise $A$ is the $\C$-linear span of positive elements it turns out
$$
\left\langle \widetilde{A} , \widetilde{A}  \right\rangle_A = A.
$$

For any positive  $\widetilde{a} \in \widetilde{A}_+$ and any $g \in G$ denote by
 $y^{\widetilde{a}}_g \in \widetilde{A} \rtimes G $ given by
\begin{equation*}
y^{\widetilde{a}}_g\left(g' \right) \left\{\begin{array}{c l}
\widetilde{a}  & g' = g \\
0 & g' \neq g
\end{array}\right..
\end{equation*}
 There is $\widetilde{x} \in \widetilde{A}$ such that $\widetilde{x}\widetilde{x}^*= \widetilde{a}$. Clearly $\widetilde{x}\left( g\widetilde{a}_j\right) , \widetilde{x}\widetilde{b}_j \in \widetilde{A}$ and from \eqref{ga_eqn} it follows that
$$
y^{\widetilde{a}}_g = \sum_{j=1}^n \left\langle x \left( g\widetilde{a}_j\right)  , x\widetilde{b}_j \right\rangle_{\widetilde{A} \rtimes G}
$$
The algebra $\widetilde{A} \rtimes G$ is the $\C$-linear span of elements $y^{\widetilde{a}}_g$, so one has $\left\langle \widetilde{A} , \widetilde{A} \right\rangle_{\widetilde{A} \rtimes G} = \widetilde{A} \rtimes G$. 
	
\end{proof}

\begin{thm}\label{main_theorem}
	If $\left(A, \widetilde{A}, G \right)$ noncommutative finite-fold covering  then a Hilbert $\left( \widetilde{A} \rtimes G,A\right) $  bimodule  $_{\widetilde{A} \rtimes G}\widetilde{A}_A$ is a $\widetilde{A} \rtimes G-A$ equivalence bimodule.
\end{thm}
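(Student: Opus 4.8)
The plan is to reduce the general assertion to the case of a covering with compactification, already handled in Lemma \ref{comp_cov_lem}, by exploiting the dense family of closed ideals $\{I_\lambda\}_{\lambda\in\Lambda}$ provided by Definition \ref{fin_def}(b). The associativity axiom (a) of Definition \ref{strong_morita_defn} requires no such reduction: since $\langle a,b\rangle_{\widetilde{A}\rtimes G}=\varphi(a\otimes b^*)$ and $\langle a,b\rangle_A=\psi(a^*\otimes b)$, the identity $\langle x,y\rangle_{\widetilde{A}\rtimes G}\,z=x\,\langle y,z\rangle_A$ is precisely the first line of \eqref{morita_g_eqn}, a purely algebraic computation valid for all $x,y,z\in\widetilde{A}$. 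Thus only the two density statements of condition (b) remain.

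For each $\lambda$ the triple $(I_\lambda,I_\lambda\widetilde{A}I_\lambda,G)$ is a noncommutative finite-fold covering with compactification, so Lemma \ref{comp_cov_lem} shows that $_{(I_\lambda\widetilde{A}I_\lambda)\rtimes G}(I_\lambda\widetilde{A}I_\lambda)_{I_\lambda}$ is an equivalence bimodule. Because $I_\lambda\subset A=\widetilde{A}^G$ is $G$-invariant, $I_\lambda\widetilde{A}I_\lambda$ is a $G$-invariant subset of $\widetilde{A}$, so $(I_\lambda\widetilde{A}I_\lambda)\rtimes G$ is a $C^*$-subalgebra of $\widetilde{A}\rtimes G$, and the two inner products computed from the same formulas for $\varphi,\psi$ are just the restrictions of $\langle\cdot,\cdot\rangle_{\widetilde{A}\rtimes G}$ and $\langle\cdot,\cdot\rangle_A$. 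Consequently the closed linear span of $\langle\widetilde{A},\widetilde{A}\rangle_A$ contains $\langle I_\lambda\widetilde{A}I_\lambda,I_\lambda\widetilde{A}I_\lambda\rangle_{I_\lambda}$, which is dense in $I_\lambda$, for every $\lambda$; since $\bigcup_\lambda I_\lambda$ is dense in $A$ by hypothesis, this already gives $\overline{\langle\widetilde{A},\widetilde{A}\rangle_A}=A$.

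The main obstacle is the density of $\langle\widetilde{A},\widetilde{A}\rangle_{\widetilde{A}\rtimes G}$ in $\widetilde{A}\rtimes G$, which I would first reduce to showing that $\bigcup_\lambda I_\lambda\widetilde{A}I_\lambda$ is dense in $\widetilde{A}$. For this I would use that $\widetilde{A}$ is a right Hilbert $A$-module under $\langle a,b\rangle_A=\sum_{g\in G}g(a^*b)$ of \eqref{finite_hilb_mod_prod_eqn}, whose norm is equivalent to the $C^*$-norm, since $\|a\|^2\le\|\langle a,a\rangle_A\|\le|G|\,\|a\|^2$. For any approximate unit $\{u_\mu\}$ of $A$ the standard Hilbert-module estimate $\|\widetilde{a}u_\mu-\widetilde{a}\|^2=\|u_\mu\langle\widetilde{a},\widetilde{a}\rangle_A u_\mu-u_\mu\langle\widetilde{a},\widetilde{a}\rangle_A-\langle\widetilde{a},\widetilde{a}\rangle_A u_\mu+\langle\widetilde{a},\widetilde{a}\rangle_A\|\to0$ yields $\widetilde{a}u_\mu\to\widetilde{a}$, and passing to adjoints $u_\mu\widetilde{a}\to\widetilde{a}$, hence $u_\mu\widetilde{a}u_\mu\to\widetilde{a}$. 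Given $\widetilde{a}\in\widetilde{A}$ and $\varepsilon>0$ I would then choose $\mu$ with $\|u_\mu\widetilde{a}u_\mu-\widetilde{a}\|<\varepsilon/2$ and, using density of $\bigcup_\lambda I_\lambda$ in $A$, a single index $\lambda$ and $v\in I_\lambda$ with $\|v-u_\mu\|$ so small that $\|v\widetilde{a}v-u_\mu\widetilde{a}u_\mu\|<\varepsilon/2$; since $v\widetilde{a}v\in I_\lambda\widetilde{A}I_\lambda$ this proves the claimed density.

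It then remains to transfer this to the crossed product. The closed linear span $S$ of $\langle\widetilde{A},\widetilde{A}\rangle_{\widetilde{A}\rtimes G}$ contains the closed span of $\langle I_\lambda\widetilde{A}I_\lambda,I_\lambda\widetilde{A}I_\lambda\rangle_{(I_\lambda\widetilde{A}I_\lambda)\rtimes G}$, which by Lemma \ref{comp_cov_lem} equals $(I_\lambda\widetilde{A}I_\lambda)\rtimes G$; in particular $S$ contains every element $y^{\widetilde{a}}_g$ with $\widetilde{a}\in I_\lambda\widetilde{A}I_\lambda$. As $\|y^{\widetilde{a}-\widetilde{a}'}_g\|=\|\widetilde{a}-\widetilde{a}'\|$ and $\bigcup_\lambda I_\lambda\widetilde{A}I_\lambda$ is dense in $\widetilde{A}$, each generator $y^{\widetilde{a}}_g$ ($\widetilde{a}\in\widetilde{A}$, $g\in G$) lies in $S$, and since $G$ is finite these generators span a dense subset of $\widetilde{A}\rtimes G$; therefore $S=\widetilde{A}\rtimes G$. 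Together with axiom (a) this shows that $_{\widetilde{A}\rtimes G}\widetilde{A}_A$ is an $\widetilde{A}\rtimes G-A$ equivalence bimodule. I expect the delicate point to be exactly this last assembly step: verifying that the $G$-invariance of the $I_\lambda$ makes the local equivalence bimodules of Lemma \ref{comp_cov_lem} sit compatibly inside the global data, so that their inner products really do exhaust $\widetilde{A}\rtimes G$ and $A$.
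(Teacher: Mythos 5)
Your proof follows the same route as the paper's: reduce to Lemma \ref{comp_cov_lem} via the family of ideals $\left\{I_\lambda\right\}_{\lambda\in\Lambda}$ supplied by Definition \ref{fin_def}, then pass from the local equalities $\left\langle I_\lambda\widetilde{A}I_\lambda, I_\lambda\widetilde{A}I_\lambda\right\rangle_{I_\lambda}=I_\lambda$ and $\left\langle I_\lambda\widetilde{A}I_\lambda, I_\lambda\widetilde{A}I_\lambda\right\rangle_{I_\lambda\widetilde{A}I_\lambda\rtimes G}=I_\lambda\widetilde{A}I_\lambda\rtimes G$ to the two global density statements. The difference is one of completeness rather than strategy, and it is in your favor. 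The paper asserts without argument that $\bigcup_\lambda I_\lambda\widetilde{A}I_\lambda$ is dense in $\widetilde{A}$, and speaks of ``extending'' the local inner products to $\widetilde{A}\times\widetilde{A}$; you instead prove the density claim, via the approximate-unit estimate in the right Hilbert $A$-module $\widetilde{A}$ (using the norm equivalence $\left\|a\right\|^2\le\left\|\left\langle a,a\right\rangle_A\right\|\le\left|G\right|\left\|a\right\|^2$ coming from \eqref{finite_hilb_mod_prod_eqn}), and you make the passage to the crossed product explicit through the generators $y^{\widetilde{a}}_g$, checking $G$-invariance of $I_\lambda\widetilde{A}I_\lambda$ so that $\left(I_\lambda\widetilde{A}I_\lambda\right)\rtimes G$ really sits inside $\widetilde{A}\rtimes G$ with compatible inner products. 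You also correctly observe that condition (a) of Definition \ref{strong_morita_defn} requires no density or extension argument at all, since the inner products \eqref{hilb_gal_eqn} are defined globally by $\varphi,\psi$ and the identity is exactly the first line of \eqref{morita_g_eqn}. In short: same skeleton, but the two steps the paper leaves unargued are the ones you actually supply, so your version is the more rigorous of the two.
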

\begin{proof}
	From the Definition \ref{fin_def} there is a family $\left\{I_\la \subset A \right\}_{\la \in \La}$ of closed ideals of $A$ such that $\bigcup_{\la \in \La} I_\la$ is a dense subset of $A$ and for any $\la \in \La$ there is a natural  noncommutative finite-fold covering with compactification $\left(I_\la, I_\la \widetilde{A} I_\la, G \right)$. From the Lemma \ref{comp_cov_lem} it turns out that   $I_\la\widetilde{A}I_\la$ is a $I_\la\widetilde{A}I_\la \rtimes G-I_\la$ equivalence bimodule and
		\begin{equation}\label{ia_eqn}
\begin{split}
\left\langle  I_\la\widetilde{A} I_\la ,  I_\la\widetilde{A}  I_\la \right\rangle_{ I_\la\widetilde{A} I_\la \rtimes G} =  I_\la\widetilde{A} I_\la \rtimes G,\\ 
\left\langle  I_\la\widetilde{A} I_\la ,  I_\la\widetilde{A}   I_\la\right\rangle_{ I_\la} =  I_\la.\\
\end{split}
\end{equation}
	for any $\la\in \La$.
The union $\bigcup_{\la \in \La} I_\la$ is a dense subset of $A$ and $\bigcup_{\la \in \La} I_\la \widetilde{A} I_\la$ is a dense subset of $\widetilde{A}$. So domains of products  $\left\langle -, - \right\rangle_{I_\la\widetilde{A} I_\la \rtimes G}~$, $~~\left\langle -, - \right\rangle_{I_\la}$ can be extended up to $\widetilde{A} \times \widetilde{A}$ and resulting products satisfy to  (a) of the Definition \ref{strong_morita_defn}. From \eqref{ia_eqn} it turns out that $\left\langle \widetilde{A}  ,  \widetilde{A}  \right\rangle_{ \widetilde{A}  \rtimes G}$ (resp. $\left\langle  \widetilde{A}  ,  \widetilde{A}  \right\rangle_{ A}$) is a dense subset of  $\widetilde{A} \rtimes G$ (resp. $A$), i.e. the condition (b) of the Definition \ref{strong_morita_defn} holds.
\end{proof}

\end{document}